\documentclass[a4paper]{amsart}

\usepackage{color,tikz}
%\usepackage[notref,notcite]{showkeys}
%%%%% author macros %%%%%%%%%
% place your own macros HERE

\DeclareMathOperator{\argmin}{arg min}

\newcommand{\R}{\mathbb{R}}

\newcommand{\wt}[1]{\widetilde{#1}}

\newcommand{\ov}[1]{\overline{#1}}

\newcommand{\mc}[1]{\mathcal{#1}}

\newcommand{\mb}[1]{\mathbb{#1}}

\newcommand{\eps}{\varepsilon}

\newcommand{\abs}[1]{\lvert#1\rvert}
\newcommand{\labs}[1]{\left\lvert\,#1\,\right\rvert}

\newcommand{\Lr}[1]{\left(#1\right)}

\newcommand{\diff}[2]{\dfrac{\partial #1}{\partial #2}}
\newcommand{\nm}[2]{\|#1\|_{#2}}

\newcommand{\set}[2]{\left\{\,#1\,\mid\,#2\,\right\}}
\def\al{\alpha}
\def\del{\delta}
\def\eps{\epsilon}
\def\na{\nabla}
\def\pa{\partial}
\def\lam{\lambda}
\def\Lam{\varLambda}

\def\x{\times}

\def\Om{\Omega}

\def\md{\mathrm{d}}
\def\dx{\,\mathrm{d}x}

\def\dsx{\,\md\sigma(x)}

\def\div{\operatorname{\na\cdot}}

\newcommand{\nn}{\nonumber}
\newcommand{\red}{\color{red}}

\theoremstyle{plain}
\newtheorem{theorem}{Theorem}[section]
\newtheorem{lemma}[theorem]{Lemma}

\theoremstyle{definition}

\theoremstyle{remark}

%%%%% end %%%%%%%%%

\begin{document}
\title[Deep Nitsche Method]{Deep Nitsche Method: Deep Ritz Method with Essential Boundary Conditions}
\author[Y. L. Liao]{Yulei Liao}
\address{LSEC, Institute of Computational Mathematics and Scientific/Engineering Computing, AMSS\\
 Chinese Academy of Sciences, No. 55, East Road Zhong-Guan-Cun, Beijing 100190, China\\
 and School of Mathematical Sciences, University of Chinese Academy of Sciences, Beijing 100049, China}
\email{liaoyulei@lsec.cc.ac.cn}

\author[P. B. Ming]{Pingbing Ming}
\address{LSEC, Institute of Computational Mathematics and Scientific/Engineering Computing, AMSS\\
 Chinese Academy of Sciences, No. 55, East Road Zhong-Guan-Cun, Beijing 100190, China\\
 and School of Mathematical Sciences, University of Chinese Academy of Sciences, Beijing 100049, China}
\email{mpb@lsec.cc.ac.cn}
%
%%%%% Begin Abstract %%%%%%%%%%%
\begin{abstract}
We propose a new method to deal with the essential boundary conditions encountered in the deep learning-based numerical solvers for partial differential equations. The trial functions representing by deep neural networks are non-interpolatory, which makes the enforcement of the essential boundary conditions a nontrivial matter. Our method resorts to Nitsche's variational formulation to deal with this difficulty, which is consistent, and does not require significant extra computational costs. We prove the error estimate in the energy norm and illustrate the method on several representative problems posed in at most $100$ dimension.
\end{abstract}
%\date\today
%%%%% end %%%%%%%%%%%

%%%%% AMS/PACs/Keywords %%%%%%%%%%%
%\pac{}
\subjclass{65N30, 65M12, 41A46, 35J25}
\keywords{Deep Nitsche Method, Deep Ritz Method, neural network approximation, mixed boundary conditions, curse of dimensionality.}
\thanks{This work was supported by the National Natural Science Foundation of China under the grant 11971467, and this work is also supported by Beijing Academy  of Artificial Intelligence (BAAI). The computations were done on the high performance computers of the State Key Laboratory of Scientific and Engineering Computing (LSEC), Chinese Academy of Sciences}
%%%% maketitle %%%%%
\maketitle
%%%% Start %%%%%%
\section{Introduction}
Recently there has been a surge of interests in solving partial differential equations by deep learning-based numerical 
methods~\cite{EHJ:2017,Khoo:2017, EYu:2018, HanJE:2018, Siri:2018, Berg:2018, Fan:2019, Rassi:2019, Chen:20201, Chen:20202, Li:2020, Liu:2020, Wang:2020, Zhang:2020, ZBYZ:2020}, and we refer to~\cite{E:2020} for a review for this direction. These methods allow for the compositional construction of new approximation sets from various neural networks. Such constructions are usually free of a mesh so that they are in essence {\em meshless methods}~\cite{Babuska:2003}. The  trial functions in the approximation sets are in general non-interpolatory, which makes the implementation of the essential boundary conditions not an easy task. There are two main approaches to handle the essential boundary conditions in deep learning-based numerical methods{. O}ne is the conforming method, which exploits a supplementary neural network to make the functions in the trial set satisfy the boundary conditions {\em exactly}. This is the approach firstly proposed in~\cite{Lagaris:1998, Lagaris:2000} and recently further developed in~\cite{Khoo:2017, Berg:2018}. The conforming method usually involves an accurate evaluation of the distance function or a cut-off function, which is not easy for domain with complicated boundary geometry; See, e.g.,~\cite{Berg:2018}. Another one is the penalty method, which is a very general concept and belongs to the so-called nonconforming method~\cite{EYu:2018, Siri:2018, Rassi:2019, Zhang:2020, ZBYZ:2020}. An additional surface term is introduced into the variational formulation to enforce the boundary conditions. However, great care has to be taken to balance the different terms in the functional framework. Otherwise, this may cause problems for the existence and uniqueness of the solution~\cite{Akziz:1985, Bochev:2009}. Moreover, the penalty method usually leads to a sub-optimal rate of convergence as shown in~\cite{Babuska:1973} for finite element methods and as shown in~\cite{Babuska:2003} for the generalized finite element methods and meshless methods.

Compared to the penalty method, the Lagrange multiplier method treats the essential boundary conditions as a constraint in the minimization. This technique has been used to deal with the essential boundary conditions in finite element method~\cite{Babuska:1973b} and wavelet method~\cite{Dahmen:2001}. The optimal rate of convergence may be achieved if the approximation function spaces are chosen properly, which relies on the so-called inf-sup condition~\cite{Babuska:1973b, Dahmen:2001}. The Lagrange multiplier method may also be used to enforce boundary conditions in the neural-network based method provided that the resulting constrained minimization problem can be efficiently solved.

An efficient method for imposing the essential boundary conditions has been proposed by Nitsche in the early 1970's~\cite{Nitsche:1971} in the finite element method. It was quite unknown for many years, and was revived in~\cite{Stenberg:1995} by {\sc Stenberg}. He revealed the interesting relation between Nitsche's method and certain stabilized Lagrangian multiplier methods. More recent efforts on Nitsche's method have been devoted to deal with the elliptic interface problems and the unfitted mesh problems; we refer to~\cite{Burman:2012} for a review of the progress in this direction. In the context of the meshless method, Nitsche's idea has been proved to be an efficient approach to deal with the essential boundary conditions in the framework of a particle partition of unity method~\cite{Griebel:2003} as well as the generalized finite element method~\cite{Melenk:2005}.

In this work, we incorporate the idea of Nitsche into the framework of Deep Ritz Method~\cite{EYu:2018} to deal with the essential boundary conditions. This new algorithm is called Deep Nitsche Method. It also imposes the boundary conditions in a nonconforming way as the penalty method. In contrast to the penalty method, this method is consistent if the exact solution is smooth enough. 
\iffalse However, it is variational consistent because it does not require a sufficiently large penalty factor, which may cause unbalance between the different terms of the energy formulation, and may also bring in extra numerical difficulties. \fi
The method is based on the energy formulation of Nitsche~\cite{Nitsche:1971}, which does not involve a Lagrange multiplier. Hence we need not  solve a constrained minimization problem, and the stochastic gradient descent (SGD) method may be used to solve the resulting minimization problem. To analyze the method, we exploit Nitsche's energy formulation instead of the Euler-Lagrange equations associated with the minimization problem, which in general does not exist for the deep Nitsche method because the trial function set formed by the deep neural network is a manifold instead of a space. We prove the energy error bound of the deep Nitsche method without taking into account the error caused by SGD. The error bound consists of two parts. The first part is the approximation error caused by the underlying deep neural network, and the second part is the estimation error, which comes from the numerically evaluation of the energy functional, equivalently, the loss function. Such error structure bears certain similarity with the {\em first Lemma of Strang}~\cite{Berger:1972}. We test the method with some mixed boundary value problems in two dimension with smooth solution and singular solution. The variational formulation may be adapted for solving nonlinear problem such as p-Laplace equation. We also apply the method to solve high dimensional problems with Dirichlet boundary condition. In all these cases, the solutions can be well approximated by the proposed method at a relative accuracy of $10^{-2}\sim 10^{-3}$, with only $10^3\sim 10^4$ parameters for 2d problems and $10^4\sim 10^5$ parameters for high dimensional problems. 
%the by assuming th lends itself for analyzing the accuracy of the method.

The rest of the paper is as follows. In the next part, we introduce the energy formulation of the method and an abstract error bound is proved by the aid of the energy formulation. In \S~\ref{sec:dnm}, we propose the deep Nitsche method, and then we present the numerical results in \S~\ref{sec:numerics} for solving some mixed boundary value problems with regular and singular solutions in two-dimension and also for problems in high-dimension up to $100$. In the last section, we conclude with some remarks.
\section{Nitsche's Variational Formulation}
We consider the following mixed boundary value problem
\begin{equation}\label{eq:mixbvp}
\left\{\begin{aligned}
-\div\Lr{A(x)\na u}&=f\quad&&\text{in\quad}\Omega,\\
u&=g_D\quad&&\text{on\quad}\Gamma_D,\\
\diff{u}{\nu}&=g_N\quad&&\text{on\quad}\Gamma_N,
\end{aligned}\right.
\end{equation}
where $\Omega$ is a bounded domain in $\mb{R}^d$, and $\Gamma_D\cup\Gamma_N=\pa\Omega$ and $\ov{\Gamma}_D\cap\ov{\Gamma}_N\not=\emptyset$. The conormal derivative of $u$ is defined as $\pa_{\nu}u=n_iA_{ij}\pa_{x_j}u$ with
$n=(n_1,\cdots,n_d)$ the outer normal of $\Om$. We assume that $A$ is a symmetric matrix with
\[
\lam\abs{\xi}^2\le A_{ij}(x)\xi_i\xi_j\le\Lam\abs{\xi}^2\qquad\text{a.e.\quad}x\in\Om\quad\text{and}\quad\xi\in\mb{R}^d.
\]

The minimization problem is defined as
\begin{equation}\label{eq:mini}
I[u_n]=\min_{v\in\mc{H}_n}I[v]
\end{equation}
with
\begin{align*}
I[v]&=\dfrac12\int_{\Om}A\na v\cdot\na v\dx+\dfrac{\beta}{2}\int_{\Gamma_D}(g_D-v)^2\dsx+\int_{\Gamma_D}(g_D-v)\pa_{\nu}v\dsx\\
&\quad-\Lr{\int_{\Om}fv\dx+\dfrac{\beta}{2}\int_{\Gamma_D}g_D^2\dsx+\int_{\Gamma_N}g_N v\dsx},
\end{align*}
where $\beta$ is a positive parameter. Here $\mc{H}_n$ is the set with neural network functions with $n$ the size of the set. For example, we define a set for a two-layer shallow network as
\[
\mc{H}_n{:}=\set{v=\sum_{i=1}^na_i\sigma(b_i\cdot x+c_i)}{a_i,c_i\in\mb{R},b_i\in\mb{R}^d}
\]
with the activation function $\sigma$. We assume that the activation function is smooth such that $\mc{H}_n\subset H^2(\Om)$. Therefore, $I[v]$ is well-defined for all $v\in\mc{H}_n$.

To study the Nitsche's variational problem, one usually resorts to the associated Euler-Lagrange equation as in~\cite{Melenk:2005}. Unfortunately, there is no such Euler-Lagrange equation for the minimization problem~\eqref{eq:mini} because $\mc{H}_n$ is a manifold instead of a subspace. This is one of the main difficulties in analyzing neural network-based numerical method. We overcome this difficult by exploiting the original energy formulation of Nitsche~\cite{Nitsche:1971}.
\begin{lemma}
The minimization problem~\eqref{eq:mini} is equivalent to
\begin{equation}\label{eq:originmini}
\wt{I}[u-u_n]=\min_{v\in\mc{H}_n}\wt{I}[u-v],
\end{equation}
where
\begin{equation}\label{eq:energy}
\wt{I}[v]{:}=\dfrac12\int_{\Omega}A\na v\cdot\na v\dx-\int_{\Gamma_D}v\diff{v}{\nu}\md\sigma(x)+\dfrac{\beta}{2}\int_{\Gamma_D}v^2\md\sigma(x),
\end{equation}
\end{lemma}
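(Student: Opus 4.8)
The plan is to show that $\wt{I}[u-v]$ and $I[v]$ coincide up to a constant independent of $v$, so that \eqref{eq:mini} and \eqref{eq:originmini} share the same minimizer $u_n$ over $\mc{H}_n$. I would begin by expanding $\wt{I}[u-v]$ through the bilinearity of the interior energy together with the two boundary integrals, and use the Dirichlet datum $u=g_D$ on $\Gamma_D$ to replace every boundary restriction of $u-v$ by $g_D-v$. This at once recovers the penalty term $\tfrac{\beta}{2}\int_{\Gamma_D}(g_D-v)^2\dsx$ of $I[v]$, together with the $v$-independent piece $\tfrac{\beta}{2}\int_{\Gamma_D}g_D^2\dsx$ that will be absorbed into the constant.

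The conceptual crux is the cross term $-\int_{\Om}A\na u\cdot\na v\dx$ coming from expanding the interior energy of $u-v$, since this is the only place where the strong form of the equation enters. I would apply Green's formula,
\[
\int_{\Om}A\na u\cdot\na v\dx=-\int_{\Om}\div(A\na u)\,v\dx+\int_{\pa\Om}v\,\pa_{\nu}u\dsx,
\]
substitute $-\div(A\na u)=f$ from \eqref{eq:mixbvp}, and split $\pa\Om=\Gamma_D\cup\Gamma_N$ using $\pa_{\nu}u=g_N$ on $\Gamma_N$. Carrying along the overall minus sign, this reproduces precisely the functionals $-\int_{\Om}fv\dx$ and $-\int_{\Gamma_N}g_Nv\dsx$ appearing in $I[v]$, and leaves a single Dirichlet contribution $-\int_{\Gamma_D}v\,\pa_{\nu}u\dsx$.

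It then remains to reconcile the conormal-derivative terms on $\Gamma_D$. Expanding $-\int_{\Gamma_D}(g_D-v)\pa_{\nu}(u-v)\dsx$ yields the Nitsche consistency term $\int_{\Gamma_D}(g_D-v)\pa_{\nu}v\dsx$ of $I[v]$, while its two remaining pieces involving $\pa_{\nu}u$, namely $-\int_{\Gamma_D}v\,\pa_{\nu}u\dsx$ and $-\int_{\Gamma_D}(g_D-v)\pa_{\nu}u\dsx$, collapse through $v+(g_D-v)=g_D$ into the $v$-independent quantity $-\int_{\Gamma_D}g_D\,\pa_{\nu}u\dsx$. Collecting all the surviving terms, I expect to arrive at
\[
\wt{I}[u-v]=I[v]+\dfrac12\int_{\Om}A\na u\cdot\na u\dx-\int_{\Gamma_D}g_D\,\pa_{\nu}u\dsx+\dfrac{\beta}{2}\int_{\Gamma_D}g_D^2\dsx,
\]
whose last three terms depend only on the exact solution and the data. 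Since an additive constant does not change the location of the minimizer, the equivalence of \eqref{eq:mini} and \eqref{eq:originmini} follows, with the same $u_n$.

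The computation is otherwise elementary, so there is no serious obstacle; the single point demanding care is that Green's formula presupposes a well-defined boundary trace of the conormal derivative $\pa_{\nu}u$, so that $u$ must be regular enough, say $u\in H^2(\Om)$. As $u$ denotes the exact solution of \eqref{eq:mixbvp}, this regularity is assumed implicitly, and everything else is bookkeeping of signs.
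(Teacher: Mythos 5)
Your proof is correct and follows essentially the same route as the paper: expand the quadratic functional $\wt{I}[u-v]$, use Green's formula with the strong equation $-\div(A\na u)=f$ and the boundary data on $\Gamma_D$ and $\Gamma_N$ to eliminate the cross terms, and conclude that $\wt{I}[u-v]$ differs from $I[v]$ by a $v$-independent constant. Indeed, your explicit constant $\frac12\int_{\Om}A\na u\cdot\na u\dx-\int_{\Gamma_D}g_D\,\pa_{\nu}u\dsx+\frac{\beta}{2}\int_{\Gamma_D}g_D^2\dsx$ is precisely the paper's $\wt{I}[u]$ after using $u=g_D$ on $\Gamma_D$, so your identity coincides with the paper's identity~\eqref{eq:iden}.
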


\begin{proof}
We start with
\begin{align*}
\wt{I}[u-v]&=\wt{I}[u]+\wt{I}[v]\\
&\quad-\int_{\Om}A\na u\cdot\na v\dx+\int_{\Gamma_D}\Lr{u\pa_{\nu}v+v\pa_{\nu}u}\md\sigma(x)-\beta\int_{\Gamma_D}uv\,\md\sigma(x).
\end{align*}
Multiplying~\eqref{eq:mixbvp}$_1$ by $v$, an integration by parts yields
\begin{align*}
\int_{\Om}A(x)\na u\cdot\na v\dx&=\int_{\Om}fv\dx+\int_{\pa\Om}\pa_{\nu}u v\md\sigma(x)\\
&=\int_{\Om}fv\dx+\int_{\Gamma_N}g_N v\md\sigma(x)+\int_{\Gamma_D}\pa_{\nu}uv\md\sigma(x).
\end{align*}
A combination of the above two equation yields
\begin{equation}\label{eq:iden}
\wt{I}[u-v]=\wt{I}[u]+I[v]
\end{equation}
with $I[v]$ given by~\eqref{eq:mini}. This proves the equivalence between the minimization problems~\eqref{eq:mini} and~\eqref{eq:originmini}.
\end{proof}

The following lemma states that Nitsche's method is consistent if the solution is smooth enough.
\begin{lemma}\label{lema:cons}
Let $\Omega$ be a Lipschitz domain. If $u\in H^2(\Omega)$, then $u$ is a critical point of the minimization problem
\begin{equation}\label{eq:consis}
\min_{v\in H^2(\Om)}\wt{I}[u-v].
\end{equation}
\end{lemma}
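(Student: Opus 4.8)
The plan is to verify directly that the Gateaux derivative of the map $v\mapsto\wt{I}[u-v]$ vanishes at $v=u$ for every admissible variation $\phi\in H^2(\Om)$. First I would record that $\wt{I}$ is a homogeneous quadratic functional, $\wt{I}[w]=\frac12 B(w,w)$, with the symmetric bilinear form
\[
B(w,\psi){:}=\int_{\Om}A\na w\cdot\na\psi\dx-\int_{\Gamma_D}\Lr{w\,\pa_{\nu}\psi+\psi\,\pa_{\nu}w}\dsx+\beta\int_{\Gamma_D}w\psi\dsx,
\]
so that the directional derivative of $v\mapsto\wt{I}[u-v]$ at $v$ in the direction $\phi$ equals $-B(u-v,\phi)$. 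Setting $v=u$ makes the argument $u-v$ vanish, whence $-B(0,\phi)=0$ for every $\phi$, which is exactly the critical-point condition.

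To exhibit the genuine consistency content (and to see where the hypotheses are used), I would connect this computation to the Nitsche functional $I$ through the identity~\eqref{eq:iden}. Since $\wt{I}[u-v]=\wt{I}[u]+I[v]$ and $\wt{I}[u]$ is independent of $v$, the first variation of $v\mapsto\wt{I}[u-v]$ coincides with that of $I$ at $v=u$; its vanishing says precisely that $u$ satisfies the weak (Euler--Lagrange) formulation associated with $I$ over the whole space $H^2(\Om)$. Equivalently, one may verify $DI[u](\phi)=0$ without invoking~\eqref{eq:iden}: the two boundary contributions carrying the factor $g_D-u$ drop out because $u=g_D$ on $\Gamma_D$, and an application of Green's formula to $\int_{\Om}A\na u\cdot\na\phi\dx$, combined with $-\div\Lr{A\na u}=f$ in $\Om$ and $\pa_{\nu}u=g_N$ on $\Gamma_N$, cancels the remaining volume and Neumann terms.

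The main obstacle is not the algebra but the functional-analytic justification of the boundary integrals. The terms involving $\pa_{\nu}u=n_iA_{ij}\pa_{x_j}u$ on $\Gamma_D$ only make sense once $\na u$ admits an $L^2$ trace on $\pa\Om$, and the version of Green's formula used above must be legitimate; both are guaranteed precisely by the standing hypotheses that $u\in H^2(\Om)$ and that $\Om$ is a Lipschitz domain. I would therefore concentrate the care here: invoking the trace theorem for $H^2(\Om)$ on a Lipschitz boundary to give meaning to $B(\cdot,\cdot)$ and to $\wt{I}$, and then checking that the formal first-variation computation is rigorous, after which the vanishing at $v=u$ is immediate from the quadratic structure.
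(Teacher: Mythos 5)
Your proposal is correct, and its core argument is genuinely different from the paper's. You exploit the fact that $\wt{I}$ is a homogeneous quadratic functional, $\wt{I}[w]=\tfrac12 B(w,w)$, so the first variation of $v\mapsto\wt{I}[u-v]$ at $v=u$ is $-B(0,\phi)=0$ for every $\phi\in H^2(\Om)$; all that remains is to check that $B$ is well defined on $H^2(\Om)\times H^2(\Om)$, which the trace theorem on a Lipschitz domain supplies, and you correctly flag this as the only place the hypotheses enter. The paper performs the same initial reduction (substitute $w=u-v$, so the claim becomes that $w=0$ is a critical point of $\min_{w\in H^2(\Om)}\wt{I}[w]$), but then proceeds quite differently: it writes the Euler--Lagrange equation $B(w,v)=0$ for all $v\in H^2(\Om)$, applies the Gauss--Green theorem (legitimate since $w,v\in H^2(\Om)$) to show any such $w$ solves \eqref{eq:mixbvp} with $f=g_D=g_N=0$, and invokes uniqueness for that boundary value problem to get $w\equiv 0$. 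The two routes buy different things. Yours is shorter and makes transparent that the lemma, read literally, holds for \emph{any} $u\in H^2(\Om)$, whether or not it solves the PDE; the equations enter only through the identity \eqref{eq:iden} (equivalently, through your direct verification $DI[u](\phi)=0$ using $u=g_D$ on $\Gamma_D$, Green's formula, and the interior and Neumann equations), which is what converts the statement into a genuine consistency assertion about the Nitsche functional $I$. The paper's route proves strictly more: $w\equiv 0$, i.e.\ $v=u$, is the \emph{unique} critical point over $H^2(\Om)$, so the variational problem singles out exactly the solution of \eqref{eq:mixbvp}. Indeed, strictly read, the paper's argument shows that every critical point is zero, while the fact that zero \emph{is} a critical point is precisely your quadratic observation; in that sense the two arguments are complementary, with yours settling the stated claim most cleanly and the paper's adding uniqueness.
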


The condition in Lemma~\ref{lema:cons} may be replaced by a weaker condition: $u,v\in H^s(\Om)$ with $s>3/2$.

\begin{proof}
By the trace theorem~\cite{Adams:2003}, the assumption $u\in H^2(\Om)$ implies that the conormal derivative $\pa_{\nu}u$ is well-defined and $\pa_{\nu}u\in L^2(\Om)$. Therefore, it remains to prove that $w=0$ is a critical point of the minimization problem
\[
\min_{w\in H^2(\Om)}\wt{I}[w].
\]
The Euler-Lagrange equation associates with the minimization problem $\min_{w\in H^2(\Om)}\wt{I}[w]$ reads as
\[
\int_{\Om}A\na w\cdot\na v\dx+\int_{\Gamma_D}w(\beta v-\pa_{\nu}v)\dsx-\int_{\Gamma_D}\pa_{\nu}wv\dsx=0\qquad\text{for all\quad}v\in H^2(\Om).
\]
Note that $w,v\in H^2(\Om)$ guarantees that the Gauss-Green theorem holds, an integration by parts implies that $w$ satisfies~\eqref{eq:mixbvp} with $f=g_D=g_N=0$. Therefore, we conclude that $w\equiv 0$ by the uniqueness of the solution of the boundary value problem~\eqref{eq:mixbvp}.
\end{proof}

In what follows, we assume that the following {\em inverse trace inequality} is valid: There exists a constant $\gamma$ such that
\begin{equation}\label{eq:inverse}
\nm{\na v}{L^2(\pa\Om)}\le\gamma\nm{\na v}{L^2(\Om)}\qquad\text{for all\quad}v\in\mc{H}_n.
\end{equation}

We also make the following approximation assumptions:
\begin{equation}\label{eq:appass}
\begin{aligned}
\inf_{v\in\mc{H}_n}\Lr{\nm{\na(u-v)}{L^2(\pa\Om)}+\gamma\nm{\na(u-v)}{L^2(\Om)}}&\le\delta,\\
\inf_{v\in\mc{H}_n}\nm{u-v}{L^2(\pa\Om)}&\le\delta_1.
\end{aligned}
\end{equation}

We shall derive the error estimate by assuming the existence of the global minimizer $u_n^\ast$, which in general need not exits. However, for any $\eps>0$, there always exists an $\epsilon-$suboptimal global minimizer $u^\eps_n\in\mc{H}_n$ in the sense that $I[u_n^\eps]\le\inf_{v\in\mc{H}_n}I[v]+\eps$. We refer to~\cite{Houska:2019} for a detailed discussion on the properties of the  $\epsilon-$suboptimal global minimizer. All the error estimates remain valid if we replace the global minimizer to the $\epsilon-$suboptimal global minimizer. In what follows, without loss of generality, we assume the existence of at least one global minimizer for the minimization problem~\eqref{eq:mini}. Given the existence of the minimizer $u_n$, we exploit the minimization problem~\eqref{eq:mini} and the identity~\eqref{eq:iden} to obtain the error estimate in Theorem~\ref{thm:main1}. %The proof is based on the original energy formulation~\eqref{eq:originmini} of \textsc{Nitsche}~\cite{Nitsche:1971}.
%,by contrast to the standard error estimate for Nitsche method, which is usually based on the Euler-Lagrange equation associated %with the the minimization problem~\eqref{eq:mini}; see; e.g.,~\cite{Melenk:2005}.
%
\begin{theorem}\label{thm:main1}
If \(
\beta>8\Lam^2\gamma^2/\lam,
\)
the inverse trace inequality~\eqref{eq:inverse} and the approximation assumption~\eqref{eq:appass} are valid, then the solution $u_n$ satisfies
\begin{equation}\label{eq:error1}
\nm{\na(u-u_n)}{L^2(\Om)}+\sqrt{\beta}\nm{u-u_n}{L^2(\Gamma_D)}
\le  C\Lr{\delta/\gamma+\delta/\sqrt\beta+\sqrt{\beta}\delta_1},
\end{equation}
where $C$ only depends on $\Lam$ and $\lam$.
\end{theorem}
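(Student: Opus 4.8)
The plan is to reformulate the problem through the symmetric Nitsche bilinear form and then run a Strang-type comparison adapted to the fact that $\mc{H}_n$ is only a manifold. Computing the first variation of $\wt{I}$ shows that $\wt{I}[v]=\tfrac12\,a(v,v)$, where
\[
a(w,\phi)=\inner{A\na w}{\na\phi}_{\Om}-\inner{\pa_\nu w}{\phi}_{\Gamma_D}-\inner{w}{\pa_\nu\phi}_{\Gamma_D}+\beta\inner{w}{\phi}_{\Gamma_D}
\]
is symmetric and bilinear. Since the equivalent minimization~\eqref{eq:originmini} gives $\wt{I}[u-u_n]\le\wt{I}[u-v]$, that is $a(e,e)\le a(\eta,\eta)$ with $e:=u-u_n$ and $\eta:=u-v$ for an arbitrary $v\in\mc{H}_n$, I would expand $e=\eta-\xi$ with $\xi:=u_n-v$ and use symmetry to reduce the comparison to the Galerkin-type inequality $a(\xi,\xi)\le 2\,a(\eta,\xi)$. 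The whole estimate then hinges on a lower bound (coercivity) for the left-hand side and an upper bound (continuity) for the right-hand side, both measured in the mesh-dependent norm $\snm{w}^2:=\nm{\na w}{L^2(\Om)}^2+\beta\nm{w}{L^2(\Gamma_D)}^2$.

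For the coercivity I would estimate $a(\xi,\xi)\ge\lam\nm{\na\xi}{L^2(\Om)}^2-2\Lam\nm{\xi}{L^2(\Gamma_D)}\nm{\na\xi}{L^2(\Gamma_D)}+\beta\nm{\xi}{L^2(\Gamma_D)}^2$, using $\abs{\pa_\nu\xi}\le\Lam\abs{\na\xi}$ and the ellipticity bounds. Replacing $\nm{\na\xi}{L^2(\Gamma_D)}$ by $\gamma\nm{\na\xi}{L^2(\Om)}$ through the inverse trace inequality~\eqref{eq:inverse} and absorbing the cross term by Young's inequality, the hypothesis $\beta>8\Lam^2\gamma^2/\lam$ yields $a(\xi,\xi)\ge\tfrac{\lam}{2}\nm{\na\xi}{L^2(\Om)}^2+\tfrac{\beta}{2}\nm{\xi}{L^2(\Gamma_D)}^2\ge C_0\snm{\xi}^2$ with $C_0=\tfrac12\min(\lam,1)$. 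For the continuity I would bound the four terms of $a(\eta,\xi)$ separately, again via $\abs{\pa_\nu\cdot}\le\Lam\abs{\na\cdot}$ and the inverse trace inequality on $\xi$ in the term $\inner{\eta}{\pa_\nu\xi}_{\Gamma_D}$, collecting the result as $a(\eta,\xi)\le M(\eta)\,\snm{\xi}$ with
\[
M(\eta)=\Lam\nm{\na\eta}{L^2(\Om)}+\frac{\Lam}{\sqrt\beta}\nm{\na\eta}{L^2(\Gamma_D)}+\Lam\gamma\nm{\eta}{L^2(\Gamma_D)}+\sqrt\beta\nm{\eta}{L^2(\Gamma_D)}.
\]

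Combining the two bounds in $a(\xi,\xi)\le 2\,a(\eta,\xi)$ and cancelling one factor of $\snm{\xi}$ gives $\snm{\xi}\le(2/C_0)M(\eta)$. I would then choose $v\in\mc{H}_n$ realizing the approximation bounds~\eqref{eq:appass} up to an arbitrarily small tolerance, so that $\nm{\na\eta}{L^2(\Om)}\le\delta/\gamma$, $\nm{\na\eta}{L^2(\Gamma_D)}\le\delta$ and $\nm{\eta}{L^2(\Gamma_D)}\le\delta_1$; since $\beta>8\Lam^2\gamma^2/\lam$ forces $\gamma\le C\sqrt\beta$, the contribution $\Lam\gamma\delta_1$ is absorbed into $\sqrt\beta\delta_1$, whence $\snm{\xi}\le C(\delta/\gamma+\delta/\sqrt\beta+\sqrt\beta\delta_1)$ with $C=C(\lam,\Lam)$. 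The assertion~\eqref{eq:error1} then follows from the triangle inequality applied to $e=\eta-\xi$, namely $\nm{\na e}{L^2(\Om)}+\sqrt\beta\nm{e}{L^2(\Gamma_D)}\le\big(\nm{\na\eta}{L^2(\Om)}+\sqrt\beta\nm{\eta}{L^2(\Gamma_D)}\big)+2\snm{\xi}$, bounding the first parenthesis once more by~\eqref{eq:appass}.

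The step I expect to be \emph{delicate} is that both the coercivity and the continuity estimates apply the inverse trace inequality~\eqref{eq:inverse} to $\xi=u_n-v$, which is a difference of two elements of the manifold $\mc{H}_n$ and therefore need not itself lie in $\mc{H}_n$. This is precisely the obstruction created by $\mc{H}_n$ being a manifold rather than a subspace; I would address it by noting that such a difference lies in a larger network class (e.g. $\mc{H}_{2n}$ for a two-layer network) on which~\eqref{eq:inverse} holds with the same constant $\gamma$, so that the inequality may legitimately be invoked for $\xi$. Everything else is a routine application of Cauchy--Schwarz, Young's inequality, and the triangle inequality.
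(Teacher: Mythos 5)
Your argument is correct, and it reaches \eqref{eq:error1} by a genuinely different organization than the paper's. The paper works directly with the quadratic energy of the error $e=u-u_n$: it lower-bounds $2\wt{I}[e]$ by $\tfrac{\lam}{2}\nm{\na e}{L^2(\Om)}^2+\tfrac{\beta}{4}\nm{e}{L^2(\Gamma_D)}^2-2\Lam^2\delta^2/\beta$, handling the troublesome boundary term $\int_{\Gamma_D}e\,\pa_{\nu}e\dsx$ by inserting $v$ and splitting $\pa_{\nu}e=\pa_{\nu}(u-v)+\pa_{\nu}(v-u_n)$ (so the approximation error $\delta$ gets mixed into the coercivity step), and then upper-bounds $\wt{I}[e]\le\wt{I}[u-v]$ via the minimization property \eqref{eq:originmini} and the assumptions \eqref{eq:appass}. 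You instead run a C\'ea-type quasi-optimality argument on the discrete difference $\xi=u_n-v$: the inequality $a(\xi,\xi)\le 2a(\eta,\xi)$ substitutes for the missing Galerkin orthogonality, and the estimate then splits cleanly into coercivity, continuity, and best approximation, finished by a triangle inequality. Your route is more modular and closer to the standard finite-element template, and it isolates stability from approximation; its price is the extra continuity term $\Lam\gamma\nm{\eta}{L^2(\Gamma_D)}$, which you correctly absorb into $\sqrt{\beta}\nm{\eta}{L^2(\Gamma_D)}$ using $\Lam\gamma<\sqrt{\lam\beta/8}$, a consequence of the hypothesis on $\beta$. The paper's direct comparison never produces that term and yields slightly more explicit constants. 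Both arguments otherwise consume exactly the same ingredients: the equivalence \eqref{eq:originmini}, the inverse trace inequality, Young's inequality, and the threshold $\beta>8\Lam^2\gamma^2/\lam$.

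On the step you flag as delicate: you are right that applying \eqref{eq:inverse} to $\xi=u_n-v$ is not literally covered by the stated hypothesis, since $\mc{H}_n$ is a manifold and differences of its elements need not lie in it. But the paper's own proof does exactly the same thing, invoking $\nm{\na(v-u_n)}{L^2(\Gamma_D)}\le\gamma\nm{\na(v-u_n)}{L^2(\Om)}$ for $v,u_n\in\mc{H}_n$ without comment. So this is not a gap of your argument relative to the paper; if anything, your explicit patch (requiring \eqref{eq:inverse} on a class closed under differences, e.g.\ $\mc{H}_{2n}$ for two-layer networks, with the corresponding constant $\gamma$) makes the hypothesis honest at a point where the paper is silent.
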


The above error estimate~\eqref{eq:error1} may be written into a more convenient form:
%\begin{equation}
\begin{align}\label{eq:error1a}
&\quad\nm{\na(u-u_n)}{L^2(\Om)}+\sqrt{\beta}\nm{u-u_n}{L^2(\Gamma_D)}\\
&\le C\inf_{v\in\mc{H}_n}
\Bigl(\Lr{\dfrac{1}{\gamma}+\dfrac{1}{\sqrt{\beta}}}
\Lr{\nm{\na(u-v)}{L^2(\Gamma_D)}+\gamma\nm{\na(u-v)}{L^2(\Om)}}+\sqrt{\beta}\nm{u-v}{L^2(\Gamma_D)}\Bigr).\nn
\end{align}
%\end{equation}

The error estimate is based on the equivalence between the minimization problems~\eqref{eq:mini} and~\eqref{eq:originmini}.

\begin{proof}
Denote $e{:}=u-u_n$. For any $v\in\mc{H}_n$, using the inverse trace inequality~\eqref{eq:inverse} and the approximation assumption~\eqref{eq:appass}$_1$, we obtain
\begin{align*}
\labs{\int_{\Gamma_D}e\pa_{\nu}e\md\sigma(x)}&\le\labs{\int_{\Gamma_D}e\pa_{\nu}(u-v)\md\sigma(x)}
+\labs{\int_{\Gamma_D}e\pa_{\nu}(v-u_n)\md\sigma(x)}\\
&\le\Lam\nm{e}{L^2(\Gamma_D)}\Lr{\nm{\na(u-v)}{L^2(\Gamma_D)}+\gamma\nm{\na(v-u_n)}{L^2(\Om)}}\\
&\le\Lam\gamma\nm{e}{L^2(\Gamma_D)}\nm{\na e}{L^2(\Om)}\\
&\quad+\Lam\nm{e}{L^2(\Gamma_D)}\Lr{\nm{\na(u-v)}{L^2(\Gamma_D)}+\gamma\nm{\na(u-v)}{L^2(\Om)}}\\
&\le\Lam\gamma\nm{e}{L^2(\Gamma_D)}\nm{\na e}{L^2(\Om)}
+\Lam\delta\nm{e}{L^2(\Gamma_D)}.
\end{align*}
Using~\eqref{eq:energy} and the above inequality, we obtain
\[
2\wt{I}[e]\ge\lam\nm{\na e}{L^2(\Om)}^2+\beta\nm{e}{L^2(\Gamma_D)}^2-2\Lam\gamma\nm{e}{L^2(\Gamma_D)}\nm{\na e}{L^2(\Om)}-2\Lam\delta\nm{e}{L^2(\Gamma_D)}.
\]
By Cauchy-Schwartz inequality and the fact that $\beta>8\Lam^2\gamma^2/\lam$, we obtain
\begin{align*}
2\wt{I}[e]&\ge\lam\nm{\na e}{L^2(\Om)}^2+\beta\nm{e}{L^2(\Gamma_D)}^2-\dfrac{\lam}{2}\nm{\na e}{L^2(\Om)}^2
-\dfrac{2\Lam^2\gamma^2}{\lam}\nm{e}{L^2(\Gamma_D)}^2\\
&\quad-\dfrac{\beta}{2}\nm{e}{L^2(\Gamma_D)}^2-2\Lam^2\del^2/\beta\\
&=\dfrac{\lam}2\nm{\na e}{L^2(\Om)}^2+\Lr{\dfrac{\beta}{2}-\dfrac{2\Lam^2\gamma^2}{\lam}}
\nm{e}{L^2(\Gamma_D)}^2-2\Lam^2\del^2/\beta\\
&\ge\dfrac{\lam}2\nm{\na e}{L^2(\Om)}^2+\dfrac{\beta}{4}\nm{e}{L^2(\Gamma_D)}^2-2\Lam^2\del^2/\beta.
\end{align*}

Next, using~\eqref{eq:originmini}, we obtain that for any $v\in\mc{H}_n$, there holds
\begin{align*}
2\wt{I}[e]\le 2\wt{I}[u-v]&\le\Lam\nm{\na(u-v)}{L^2(\Om)}^2+2\Lam\nm{u-v}{L^2(\Gamma_D)}\nm{\na(u-v)}{L^2(\Gamma_D)}\\
&\quad+\beta\nm{u-v}{L^2(\Gamma_D)}^2\\
&\le\dfrac{\Lam\delta^2}{\gamma^2}+2\Lam\delta_1\delta+\beta\delta_1^2,
%&\le\dfrac{\delta^2}{\beta}+2\Lam\delta_1\delta+\beta\delta_1^2.
\end{align*}
where we have used the approximation properties~\eqref{eq:appass} in the last step.

A combination of the above two inequalities gives
\begin{align*}
\dfrac{\lam}2\nm{\na e}{L^2(\Om)}^2+\dfrac{\beta}{4}\nm{e}{L^2(\Gamma_D)}^2
&\le\dfrac{\Lam\delta^2}{\gamma^2}+2\Lam\delta_1\delta+\beta\delta_1^2+2\Lam^2\del^2/\beta\\
%&=(1+2\Lam^2)\dfrac{\delta^2}{\beta}+2\Lam\delta_1\delta+\beta\delta_1^2\\
&\le\dfrac{\Lam\delta^2}{\gamma^2}+3\Lam^2\delta^2/\beta+2\beta\delta_1^2.
\end{align*}
This implies
\[
\nm{\na(u-u_n)}{L^2(\Om)}\le\max(\sqrt{2\Lam/\lam},\sqrt{6\Lam^2/\lam},2/\sqrt{\lam})
\Lr{\delta/\gamma+\delta/\sqrt\beta+\sqrt{\beta}\delta_1},
%\nm{\na(u-u_n)}{L^2(\Om)}\le C\Lr{\delta/\sqrt\beta+\sqrt{\beta}\delta_1},
\]
and
\[
\sqrt{\beta}\nm{u-u_n}{L^2(\Gamma_D)}\le 2\max(\sqrt{\Lam}, \sqrt{3}\Lam,\sqrt2)\Lr{\delta/\gamma+\delta/\sqrt\beta+\sqrt\beta\delta_1}.
%\nm{u-u_n}{L^2(\Gamma_D)}\le C\Lr{\delta/\beta+\delta_1},
\]
%where $C$ only depends on $\lam$ and $\Lam$.

Combining the above two inequalities, we obtain~\eqref{eq:error1}.
\end{proof}
%For the Dirichlet boundary value problem, $\Gamma_N=\emptyset$, the standard Nitsche's formula is recovered. In the original formulation of Nitsche, $\beta=ch^{-1}$ for certain constant $c$ and $h$ is the mesh size.

In what follows, we consider the numerical integration of Nitsche's formulation.
\begin{equation}\label{eq:nitscheMC}
I^\ast[u_n^\ast]=\min_{v\in\mc{H}_n}I^\ast[v],
\end{equation}
where $I^\ast[v]$ is the Monte Carlo approximation or Quasi-Monte Carlo approximation~\cite{Dick:2013} of $I[v]$ or other numerical integration schemes acting on $I[v]$.
\begin{theorem}\label{thm:main}
If the inverse trace inequality~\eqref{eq:inverse} is true, then there exists $C$ that depends only on $\Lam$ and $\lam$ such that the minimizer $u_n^\ast\in\mc{H}_n$ satisfies
\begin{equation}\label{eq:error3}
\begin{aligned}
&\quad\nm{\na(u-u_n^\ast)}{L^2(\Om)}+\sqrt{\beta}\nm{u-u_n^\ast}{L^2(\Gamma_D)}\\
&\le C\inf_{v\in\mc{H}_n}
\Bigl((1/\gamma+1/\sqrt{\beta})
\Lr{\nm{\na(u-v)}{L^2(\Gamma_D)}+\gamma\nm{\na(u-v)}{L^2(\Om)}}+\sqrt{\beta}\nm{u-v}{L^2(\Gamma_D)}\\
&\phantom{C\inf_{v\in\mc{H}_n}}\qquad+2(1/\sqrt{\lam}+\sqrt2)\labs{I^\ast[v]-I[v]}^{1/2}\Bigr)\\
&\quad+2(1/\sqrt{\lam}+\sqrt2)\labs{I^\ast[u_n^\ast]-I[u_n^\ast]}^{1/2}.
\end{aligned}
\end{equation}
\end{theorem}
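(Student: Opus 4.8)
The plan is to keep the architecture of the proof of Theorem~\ref{thm:main1} intact, reusing the coercivity (lower) bound essentially verbatim, and to replace the C\'ea-type (upper) bound --- which exploited that $u_n$ minimizes $I$ --- by a Strang-type argument that carries the quadrature error, since $u_n^\ast$ minimizes only $I^\ast$.

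First I would establish the lower bound. Setting $e^\ast:=u-u_n^\ast$ and arguing exactly as in Theorem~\ref{thm:main1} --- splitting $e^\ast=(u-v)+(v-u_n^\ast)$ on $\Gamma_D$, applying the inverse trace inequality~\eqref{eq:inverse} to $v-u_n^\ast$ and the triangle inequality $\nm{\na(v-u_n^\ast)}{L^2(\Om)}\le\nm{\na(u-v)}{L^2(\Om)}+\nm{\na e^\ast}{L^2(\Om)}$ --- I obtain, for every $v\in\mc{H}_n$,
\[
\labs{\int_{\Gamma_D}e^\ast\pa_\nu e^\ast\dsx}\le\Lam\gamma\nm{e^\ast}{L^2(\Gamma_D)}\nm{\na e^\ast}{L^2(\Om)}+\Lam\nm{e^\ast}{L^2(\Gamma_D)}D(v),
\]
where $D(v):=\nm{\na(u-v)}{L^2(\Gamma_D)}+\gamma\nm{\na(u-v)}{L^2(\Om)}$. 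Feeding this into~\eqref{eq:energy} and using Young's inequality together with the condition $\beta>8\Lam^2\gamma^2/\lam$ (retained, as in Theorem~\ref{thm:main1}, for coercivity) yields
\[
2\wt{I}[e^\ast]\ge\frac{\lam}{2}\nm{\na e^\ast}{L^2(\Om)}^2+\frac{\beta}{4}\nm{e^\ast}{L^2(\Gamma_D)}^2-\frac{2\Lam^2 D(v)^2}{\beta}.
\]

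The genuinely new step is the upper bound, and this is where I expect the difficulty to lie. In Theorem~\ref{thm:main1} the inequality $\wt{I}[e]\le\wt{I}[u-v]$ came for free from $u_n$ minimizing $I$ via the equivalence $\wt{I}[u-w]=\wt{I}[u]+I[w]$ of~\eqref{eq:iden}. Here $u_n^\ast$ minimizes only $I^\ast$, so I would insert and subtract $I^\ast$ in the manner of the first Lemma of Strang. Writing $\wt{I}[e^\ast]=\wt{I}[u]+I[u_n^\ast]$ by~\eqref{eq:iden}, then using minimality of $u_n^\ast$ for $I^\ast$,
\[
I[u_n^\ast]=\Lr{I[u_n^\ast]-I^\ast[u_n^\ast]}+I^\ast[u_n^\ast]\le\labs{I[u_n^\ast]-I^\ast[u_n^\ast]}+I^\ast[v]
\]
for every $v\in\mc{H}_n$, and similarly $I^\ast[v]\le\labs{I^\ast[v]-I[v]}+I[v]$. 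Re-applying~\eqref{eq:iden} in the form $\wt{I}[u]+I[v]=\wt{I}[u-v]$ gives
\[
2\wt{I}[e^\ast]\le 2\wt{I}[u-v]+2\labs{I[u_n^\ast]-I^\ast[u_n^\ast]}+2\labs{I^\ast[v]-I[v]},
\]
after which $2\wt{I}[u-v]$ is bounded exactly as in Theorem~\ref{thm:main1} by $\Lam\nm{\na(u-v)}{L^2(\Om)}^2+2\Lam\nm{u-v}{L^2(\Gamma_D)}\nm{\na(u-v)}{L^2(\Gamma_D)}+\beta\nm{u-v}{L^2(\Gamma_D)}^2$.

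Finally I would combine the two bounds. Equating the lower and upper estimates for $2\wt{I}[e^\ast]$, using $\nm{\na(u-v)}{L^2(\Om)}\le D(v)/\gamma$ and $\nm{\na(u-v)}{L^2(\Gamma_D)}\le D(v)$, and absorbing the cross term with one more Young inequality (as in the passage from~\eqref{eq:error1} to~\eqref{eq:error1a}) leaves
\[
\frac{\lam}{2}\nm{\na e^\ast}{L^2(\Om)}^2+\frac{\beta}{4}\nm{e^\ast}{L^2(\Gamma_D)}^2\le\frac{\Lam D(v)^2}{\gamma^2}+\frac{3\Lam^2 D(v)^2}{\beta}+2\beta\nm{u-v}{L^2(\Gamma_D)}^2+2\labs{I[u_n^\ast]-I^\ast[u_n^\ast]}+2\labs{I^\ast[v]-I[v]}.
\]
Taking square roots, using $\sqrt{a_1+\cdots+a_k}\le\sqrt{a_1}+\cdots+\sqrt{a_k}$, and adding the resulting bounds for $\nm{\na e^\ast}{L^2(\Om)}$ and $\sqrt{\beta}\nm{e^\ast}{L^2(\Gamma_D)}$ reproduces~\eqref{eq:error3}; in particular both quadrature terms acquire the coefficient $\sqrt{2/\lam}\cdot\sqrt2+2\sqrt2=2(1/\sqrt{\lam}+\sqrt2)$, matching the stated constant. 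Since $v$ is arbitrary, the infimum is taken at the end, and because $\labs{I[u_n^\ast]-I^\ast[u_n^\ast]}$ is $v$-free it falls outside the infimum while $\labs{I^\ast[v]-I[v]}$ remains inside. The main obstacle is precisely organizing this Strang-type insertion so that the error of $I^\ast$ is charged only at the two points $u_n^\ast$ and $v$, rather than uniformly over the manifold $\mc{H}_n$; everything else is a faithful transcription of Theorem~\ref{thm:main1}.
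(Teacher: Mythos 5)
Your proposal is correct and follows essentially the same route as the paper: the paper's own proof performs exactly your Strang-type insertion (via the identity~\eqref{eq:iden} and the minimality $I^\ast[u_n^\ast]\le I^\ast[v]$) to get $\wt{I}[u-u_n^\ast]\le\wt{I}[u-v]+\Lr{I^\ast[v]-I[v]}+\Lr{I[u_n^\ast]-I^\ast[u_n^\ast]}$, and then invokes the coercivity and upper-bound estimates from the proof of Theorem~\ref{thm:main1} verbatim, just as you do. Your explicit tracking of the constant $2(1/\sqrt{\lam}+\sqrt2)$ and your remark that the condition $\beta>8\Lam^2\gamma^2/\lam$ must be retained (it is implicit in the paper's ``proceeding along the same line'') are accurate fillings-in of details the paper leaves to the reader.
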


The above inequalities~\eqref{eq:error1a} and~\eqref{eq:error3} give the error estimate for the Deep Nitsche method without taking into account the iteration error in SGD. The first term in the right-hand side of both inequalities is the approximation error, which may be bounded once $\mc{H}_n$ is specified, and we shall discuss this in the next part. The second term and the third term are the estimation error, i.e., the consistency error in the sense of numerical analysis. The estimate for these terms are standard, and we refer to~\cite{Dick:2013} for a review. The difference between the above estimate and the first Strang lemma~\cite{Berger:1972} in finite element is the last term in the right-hand side of~\eqref{eq:error3}, which depends on the approximating solution $u_n^\ast$, which usual appears in the error estimate of the nonlinear problems, while there is no such term in the first lemma of Strang. 
%The main reason is that we have very few information about $I^\ast$ except that $I^\ast$ is the Monte-Carlo or Quasi-Monte Carlo approximations of $I$.

\begin{proof}
We start with the identity~\eqref{eq:iden} and note that
\begin{align*}
\wt{I}[u-u_n^\ast]&=\wt{I}[u]+I[u_n^\ast]\\
&=\wt{I}[u]+I^\ast[u_n^\ast]+I[u_n^\ast]-I^\ast[u_n^\ast]\\
&\le\wt{I}[u]+I^\ast[v]+I[u_n^\ast]-I^\ast[u_n^\ast]\\
&=\wt{I}[u]+I[v]+I^\ast[v]-I[v]+I[u_n^\ast]-I^\ast[u_n^\ast]\\
&=\wt{I}[u-v]+\Lr{I^\ast[v]-I[v]}+\Lr{I[u_n^\ast]-I^\ast[u_n^\ast]},
\end{align*}
where we have used the minimization problem~\eqref{eq:mini} in the last step.

Proceeding along the same line that leads to~\eqref{eq:error1}, we obtain~\eqref{eq:error3}.
\end{proof}
\section{Deep Nitsche Method}\label{sec:dnm}
We minimize $I[v]$ over certain trial set $\mc{H}_n$ that will be specified below. We shall omit the subscript $n$ in what follows 
to avoid the cluttering of the notations. The resulting optimization problem is solved by the standard Stochastic Gradient Descent (SGD) method~\cite[\S 8]{Goodfellow:2016}.
\begin{equation}\label{eq:discretevara}
\Hat{u}=\argmin_{v\in\mc{H}}I[v].
\end{equation}
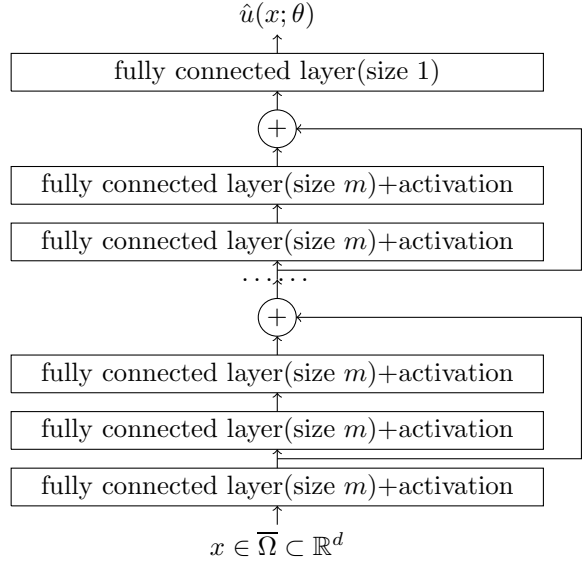
\begin{figure}[htbp]\centering\begin{tikzpicture}
	\node at(0,0){$x\in\overline{\Omega}\subset{\mb{R}^d}$};
	\draw[->](0,0.25)--(0,0.5);
	\draw(-3.5,0.5)--(3.5,0.5)--(3.5,1)--(-3.5,1)--(-3.5,0.5);
	\node at(0,0.75){fully connected layer(size $m$)+activation};
	\draw[->](0,1)--(0,1.25);
	\draw(-3.5,1.25)--(3.5,1.25)--(3.5,1.75)--(-3.5,1.75)--(-3.5,1.25);
	\node at(0,1.5){fully connected layer(size $m$)+activation};
	\draw[->](0,1.75)--(0,2);
	\draw(-3.5,2)--(3.5,2)--(3.5,2.5)--(-3.5,2.5)--(-3.5,2);
	\node at(0,2.25){fully connected layer(size $m$)+activation};
	\draw[->](0,2.5)--(0,2.75);
	\draw(0,3)circle[radius=0.25];
	\node at(0,3){+};
	\draw[->](0,1.125)--(4,1.125)--(4,3)--(0.25,3);
	\draw[->](0,3.25)--(0,3.5);
	\node at(0,3.5){……};
	\draw[->](0,3.5)--(0,3.75);
	\draw(-3.5,3.75)--(3.5,3.75)--(3.5,4.25)--(-3.5,4.25)--(-3.5,3.75);
	\node at(0,4){fully connected layer(size $m$)+activation};
	\draw[->](0,4.25)--(0,4.5);
	\draw(-3.5,4.5)--(3.5,4.5)--(3.5,5)--(-3.5,5)--(-3.5,4.5);
	\node at(0,4.75){fully connected layer(size $m$)+activation};
	\draw[->](0,5)--(0,5.25);
	\draw(0,5.5)circle[radius=0.25];
	\node at(0,5.5){+};
	\draw[->](0,3.625)--(4,3.625)--(4,5.5)--(0.25,5.5);
	\draw[->](0,5.75)--(0,6);
	\draw(-3.5,6)--(3.5,6)--(3.5,6.5)--(-3.5,6.5)--(-3.5,6);
	\node at(0,6.25){fully connected layer(size $1$)};
	\draw[->](0,6.5)--(0,6.75);
	\node at(0,7){$\Hat{u}(x;\theta)$};
\end{tikzpicture}
\caption{The component of ResNet.}\label{fig:ResNet}
\end{figure}
The trial functions set $\mc{H}$ is modeled by ResNet~\cite{He:2016}. The component of ResNet is shown in Figure~\ref{fig:ResNet}. The input layer is a fully connected layer with $m$ hidden nodes, which maps $x$ from $\mb{R}^d$ to $\mb{R}^m$. Assume that $\sigma$ is a scalar activation function and let $\phi$ be the tensor product of $\sigma$ as $\phi(x)=(\sigma(x_1),\cdots,\sigma(x_m))\in\mb{R}^m$, then
\[
s_1=\phi(W_1x+b_1),
\]
where $W_1\in\mb{R}^{m\x d}$ and $b_1\in\mb{R}^m$. The hidden layers is constructed by $l$ residual blocks. Each block contains two fully connected layers and one residual connection layer. The $i-$th block takes the form
\[
s_{i+1}=\phi(W_{2,i}\phi(W_{1,i}s_i+b_{1,i})+b_{2,i})+s_i,
\]
where $W_{1,i},W_{2,i}\in\mb{R}^{m\x m}$ and $b_{1,i},b_{2,i}\in\mb{R}^m$.

%For $i=1,\cdots,l+1$, we denote the tensor products of $s_i$ by $\mathbf{s}_i$, and
The output layer is a fully connected layer with one hidden node. The approximation solution may be represented as
\[
\Hat{u}(x;\theta)=W_2s_{l+1}+b_2,
\]
where $W_2\in\mb{R}^{1\x m}$ and $b_2\in\mb{R}$, the parameter set $\theta$ is defined as
\[
\theta{:}=\set{W_1,W_2,b_1,b_2,W_{1,i},W_{2,i},b_{1,i},b_{2,i}}{i=1,\dots,l}.
\]

%In order to minimize the energy functional $I[v]$, the loss functional is a discrete form of the integrand~\eqref{eq:energy}.
In each step of the SGD iteration, we randomly sample $N_i$ points $\{x_k^{(i)}\}_{k=1}^{N_i}\subset\Omega$,  $N_d$ points $\{x_k^{(d)}\}_{k=1}^{N_d}\subset\Gamma_D$ and $N_n$ points $\{x_k^{(n)}\}_{k=1}^{N_n}\subset\Gamma_N$. The loss function is defined as
\begin{align*}
I^\ast[v]{:}=&\dfrac{\abs{\Omega}}{N_i}\sum_{k=1}^{N_i}\Lr{\dfrac12A(x_k^{(i)})
\na_x\Hat{u}(x_k^{(i)};\theta)\cdot\na_x\Hat{u}(x_k^{(i)};\theta)-f(x_k^{(i)})\Hat{u}(x_k^{(i)};\theta)}\\
&+\dfrac{\abs{\Gamma_D}}{N_d}\sum_{k=1}^{N_d}
\Lr{\dfrac{\beta}{2}\Hat{u}^2(x_k^{(d)};\theta)-\Hat{u}(x_k^{(d)};\theta)A(x_k^{(d)})
\na_x\Hat{u}(x_k^{(d)};\theta)\cdot n}\\
&-\dfrac{\abs{\Gamma_D}}{N_d}\sum_{k=1}^{N_d}g_D\Lr{\beta\Hat{u}(x_k^{(d)};\theta)
-A(x_k^{(d)})\na_x\Hat{u}(x_k^{(d)};\theta)\cdot n}\\
&-\dfrac{\abs{\Gamma_N}}{N_n}\sum_{k=1}^{N_n}g_N\Hat{u}(x_k^{(n)};\theta).
\end{align*}
Now the minimization problem reads as
\[
\min_{v\in\mc{H}}I^\ast[v].
\]

In what follows, we discuss the theoretical assumptions on Theorem~\ref{thm:main}. The inverse trace inequality~\eqref{eq:inverse} seems to has been absent for neural network functions, which is even missing for the meshless methods; c.f.,~\cite[\S 7]{Melenk:2005}. For a function $v$ representing by a two-layer Gaussian network, {\sc Mhaskar}~\cite{Mhaskar:2004, Mhaskar:2005} and {\sc Erd\'elyi}~\cite{Erde:2006} proved the following inverse inequality:
\[
\nm{\na^2 v}{L^2(\R)}\le C_{\text{inv}}\sqrt{n}\nm{\na v}{L^2(\R)},
\]
where $C_{\text{inv}}$ is an absolute constant, and $n$ is the number of the neurons. However, it does not seem easy to extend the proof of the above inequality in~\cite[Theorem 2.2]{Erde:2006} to a finite interval. Once the above type inequality is valid for a bounded domain with $\sqrt{n}$ replaced by $n^\al$ with $\al>1/2$, then we may use the trace inequality to prove ~\eqref{eq:inverse} with $\gamma=C_{\text{trace}}(1+C_{\text{inv}})n^\al$, where $C_{\text{trace}}$ is a constant appears in the classical trace inequality~\cite{Adams:2003}, which may depend on $\Omega$ but independent of $v$ and $n$.
%, let alone we do not know whether this is true for a bounded domain in high dimension. 
\iffalse Therefore, we obtain
\[
\nm{\na v}{L^2(\pa\Omega)}\le C_{\text{trace}}\nm{\na v}{H^1(\Omega)}\le C_{\text{trace}}(1+C_{\text{inv}})\sqrt{n}\nm{\na v}{L^2(\Omega)}
\]
where $C_{\text{trace}}$ is a constant appears in the trace inequality~\cite{Adams:2003}, which may depend on $\Omega$ but independent of $v$.
This proves~\eqref{eq:inverse} with $\gamma=C_{\text{trace}}(1+C_{\text{inv}})\sqrt{n}$.\fi

The approximation estimates~\eqref{eq:appass} for two-layer neural networks is well-established, and we refer to~\cite{Pinkus:99} for a review. While such results for multi-layer neural networks are not so complete, we refer to~\cite{Guhring:2020, Bolcskei:2019} for the progress in this direction. Most estimates in this vein except~\cite{Lu:2020} is asymptotical in the sense that the approximation bound is valid for specified width and depth of a multi-layer neural network. But what we need is a sharp approximation bound for a deep neural network with arbitrary width and depth in the energy norm. The way for bounding the estimation error is quite standard~\cite{Dick:2013} provided that we assume certain smoothness on the functions in $\mc{H}_n$. Therefore, we may obtain the rate of convergence by combining these two type errors, and we shall leave it for further study.
\section{Numerical Experiments}\label{sec:numerics}
We apply the Deep Nitsche Method to solve the mixed boundary value problem~\eqref{eq:mixbvp}. In all the examples, we use the activation function $\sigma=\tanh$, and let the domain $\Omega$ be a hypercube unless otherwise stated, i.e., $\Omega=(0,1)^d$. We report the relative errors
\[
e_{L^2}{:}=\dfrac{\nm{u-\Hat{u}}{L^2}}{\nm{u}{L^2}}\quad\text{and\quad}
e_{H^1}{:}=\dfrac{\nm{u-\Hat{u}}{H^1}}{\nm{u}{H^1}}
\]
for all examples.

There are lots of choices for generating sampling points to approximate the energy functional $I[v]$ during the training process, which is a crucial part for the efficiency of the method. The number of the uniform sampling points grows exponentially with the dimension, hence it quickly becomes unpractical. We use Quasi-Monte Carlo method~\cite{Dick:2013} to approximate $I$, and the sampling points are generated by the Halton sequence~\cite{Halton:1960}. To be more specific, we use the growing prime values starting from $2$ as the prime bases in each dimension. For example, in three dimensional problem, the prime base is $2$ for $x$-axis, $3$ for $y$-axis, and $5$ for $z$-axis, respectively. There are many other sets of the low discrepancy sequences beyond the Halton sequence. However, an evaluation of their efficiency seems beyond the scope of the present work. Quasi-Monte Carlo is also used to approximate the relative errors $e_{H^1}$ and $e_{L^2}$ during the test process. We generate $10^5$ sampling points in $\Omega$ by the Halton sequence, with the same prime bases as above.
\subsection{Two-dimensional examples}
The solution $u$ is approximated by a neural network with five residual blocks and $10$ hidden nodes per fully connected layer. Noticing that one residual block contains $2$ fully connected layers and $1$ residual connection, the number of trainable parameters is $1141$. An Adam optimizer is employed to train with the learning rate $0.001$~\cite{Adam:2014}. We train the model for $50000$ epochs. For simplicity, we take the same number of points in the domain and on each of boundaries. In each epoch, we generate $64$ points inside the domain $\Omega$ and $64$ points on each edge of $\pa\Omega$, by a Quasi-Monte Carlo method based on low-discrepancy Halton sequence as discussed above. In order to save memory and running time, and at the same time to achieve a basically equivalent convergence effect, we set hyper-parameters, such as the number of layers and neural nodes, and the size of training set very small, because we train a relatively small neural network in this part.

In the first example, we test a mixed boundary value problem with the coefficient matrix $A$ given by
\[
A=\begin{pmatrix}
(x+1)^2+y^2 & -xy\\
-xy & (x+1)^2
\end{pmatrix}.
\]
We let the solution be
\[
u(x,y)=x^3y^2+x\sin(2\pi xy)\sin(2\pi y),
\]
and we set $\Gamma_D=\{1\}\x(0,1)\cup(0,1)\x\{1\}$ and $\Gamma_N=\{0\}\x(0,1)\cup(0,1)\x\{0\}$. The source term $f$ and the boundary data $g_D$ and $g_N$ are computed by~\eqref{eq:mixbvp}. The relative errors $e_{L^2}$ and $e_{H^1}$ decrease with the number of iterations, as shown in Figure~\ref{fig:mixed}, and the final relative errors are reported in Table~\ref{tab:mixed}, with different panalized parameters $\beta$.
\begin{figure}[htbp]\centering
\includegraphics[width=\textwidth]{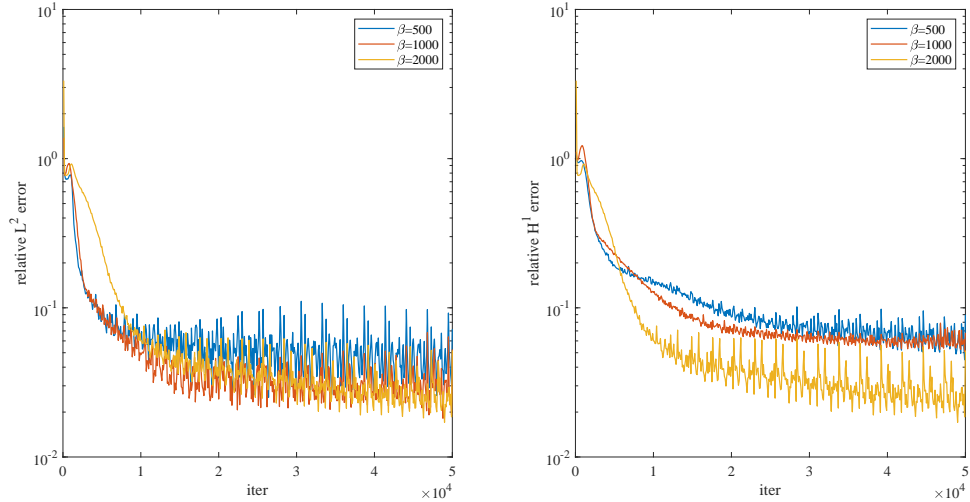}
\caption{A mixed boundary problem in two dimension with different penalized parameter $\beta$.}\label{fig:mixed}
\end{figure}

\begin{table}[htbp]\centering\caption{A mixed boundary value problem in two dimension with different penalized parameter $\beta$.}\label{tab:mixed}
\begin{tabular}{c|cc}
\hline
$\beta$ & $e_{L^2}$ & $e_{H^1}$\\
\hline
500 & 3.925e-02 & 6.960e-02\\
1000 & 3.633e-02 & 7.981e-02\\
2000 & 2.036e-02 & 5.124e-02\\
\hline
\end{tabular}\end{table}

In view of Figure~\ref{fig:mixed} and Table~\ref{tab:mixed}, it seems the accuracy for methods with different parameters $\beta$ are comparable, and method with bigger $\beta$ yields slightly better results.

In the second example, we consider
\[
\left\{
\begin{aligned}
-\triangle u(x) &=0\qquad&&x\in\Omega,\\
u(x)=u(r,\theta)&=r^{1/2}\sin(\theta/2)\qquad&&x\in\pa\Omega,
\end{aligned}\right.
\]
where $\Omega=(-1,1)^2\backslash[0,1)\times\{0\}$ is a cracked domain. This problem has an analytical solution
\[
u(x)=r^{1/2}\sin(\theta/2),
\]
which belongs to $H^s(\Omega)$ with $s<3/2$. In fact, such solution usually stands for the singular part of the general solution~\cite{strang1973analysis}. We report the relative errors in Figure~\ref{fig:lowregularity} and Table~\ref{tab:lowregularity} with different penalized parameters $\beta$.
\begin{figure}[htbp]\centering
\includegraphics[width=\textwidth]{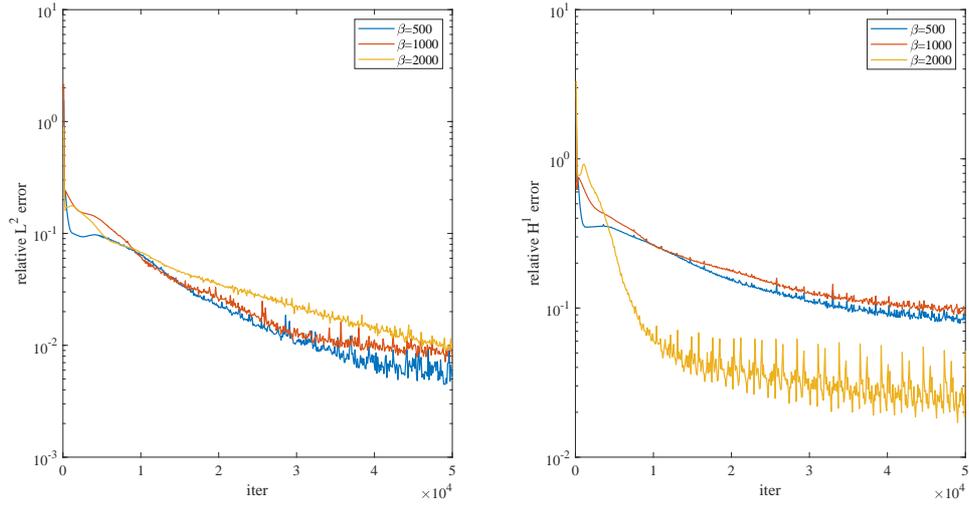}
\caption{A singular solution in two dimension.}\label{fig:lowregularity}
\end{figure}

\begin{table}[htbp]\centering\caption{A singular solution in two dimension.}\label{tab:lowregularity}
\begin{tabular}{c|cc}
\hline
$\beta$ & $e_{L^2}$ & $e_{H^1}$\\
\hline
500 & 5.298e-03 & 8.513e-02\\
1000 & 7.226e-03 & 9.148e-02\\
2000 & 1.019e-02 & 5.177e-02\\
\hline
\end{tabular}\end{table}

In view of Figure~\ref{fig:lowregularity} and Table~\ref{tab:lowregularity}, the accuracy for the methods with three different parameters $\beta$ are also comparable. By contrast to the previous example, the method with smaller parameter gives better $L^2$ error.
%are it seems that the parameter $\beta$ should not be too large in practice, which is consistent with the theoretical result %because the Deep Nitsche Method does not require $\beta$ tending to infinity to ensure convergence.
%Even if $\beta$ is not so big, the convergence of Deep Nitsche Method still holds true in theory.
%
\subsection{$p$-Laplace equation}
In this part, we solve the $p$-Laplace equation posed on a unit square $\Omega=(0,1)^2$,
\[
\left\{
\begin{aligned}
-\div\Lr{\abs{\na u}^{p-2}\na u} =f\qquad&&x\in\Omega,\\
u(x)=g_D\qquad&&x\in\pa\Omega,
\end{aligned}\right.
\]
The energy functional $I$ is written as
\begin{align*}
I[v]&=\dfrac1p\int_{\Om}\abs{\na v}^p\dx+\dfrac{\beta}{2}\int_{\pa\Om}(g_D-v)^2\md\sigma(x)
+\int_{\pa\Om}(g_D-v)\pa_{\nu}v\md\sigma(x)\\
&\quad-\Lr{\int_{\Om}fv\dx+\dfrac{\beta}{2}\int_{\pa\Om}g_D^2\md\sigma(x)},
\end{align*}
where $\pa_{\nu}v=\abs{\na v}^{p-2}\pa_nv$.

In order to reduce unnecessary effects of hyper-parameters,  we keep the configurations of the neural networks the same as that for the linear problem. 
\iffalse That means we still use a neural network with five residual blocks and $10$ hidden nodes per fully connected layer to approximate the solution. An Adam optimizer with the same hyper-parameters as before is employed. In each epoch, we still generate $64$ points inside the domain $\Omega$ and $64$ points on each edge of $\pa\Omega$, by a Quasi-Monte Carlo method based on low-discrepancy Halton sequence. {\red the setup is the same as the linear problem? Yes!}\fi

Firstly we test the classical example from~\cite{Glowinski:1975}
\[
u=2^{-1/(p-1)}(1-1/p)\Lr{1-r^{p/(p-1)}},\quad r=(x^2+y^2)^{1/2}.
\]
A direct calculation gives that $f=1$. We note that $\na u(x)\to 0$ as $x\to 0$. Such singularity may cause difficulties in computation. We report the relative errors in Figure~\ref{fig:nonlinear1} and Table~\ref{tab:nonlinear1} with different parameters $p$ and $\beta$.
\begin{figure}[htbp]
\centering
\includegraphics[width=\textwidth]{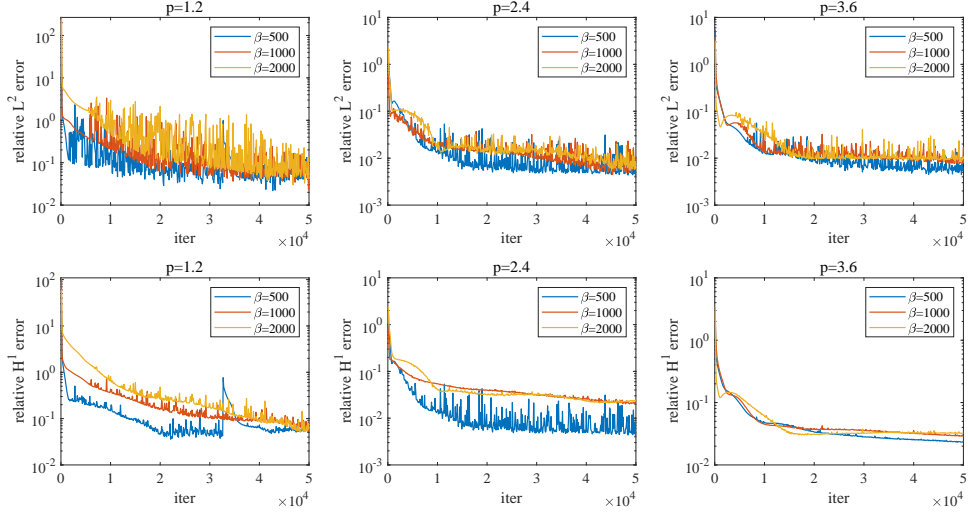}
\caption{A smooth solution for $p$-Laplace equation: for $p=1.2, 2.4$ and $3.6$, the upper figures show the relative $L^2$ error and the lower figures show the relative $H^1$ error.}\label{fig:nonlinear1}
\end{figure}
\begin{table}[htbp]
\centering\caption{A smooth solution for $p$-Laplace equation.}\label{tab:nonlinear1}\begin{tabular}{c|c|cc}
\hline
$p$&$\beta$ & $e_{L^2}$ & $e_{H^1}$\\
\hline
1.2 & 500 & 6.968e-02 & 6.426e-02\\
& 1000 & 4.993e-02 & 5.760e-02\\
& 2000 & 4.057e-02 & 5.802e-02\\
\hline
2.4 & 500 & 4.646e-03 & 4.646e-03\\
& 1000 & 1.092e-02 & 2.410e-02\\
& 2000 & 5.835e-03 & 2.371e-02\\
\hline
3.6 & 500 & 6.310e-03 & 2.397e-02\\
& 1000 & 1.747e-02 & 2.974e-02\\
& 2000 & 9.412e-03 & 3.225e-02\\
%\hline
%4.8 & 500 & 6.228e-03 & 3.073e-02\\
%& 1000 & 1.656e-02 & 3.655e-02\\
%& 2000 & 1.088e-02 & 4.385e-02\\
\hline
\end{tabular}\end{table}

In the second example, we test a less smooth solution:
\[
u=r^{(p-2)/(p-1)}.
\]
A direct calculation gives that $f=0$. The solution $u$ does not belong to $H^2(\Omega)$ when $p>2$, and $\na u\to\infty$ as $x\to 0$. We report the relative errors in Figure~\ref{fig:nonlinear2} and Table~\ref{tab:nonlinear2}.
\begin{figure}[htbp]\centering
\includegraphics[width=\textwidth]{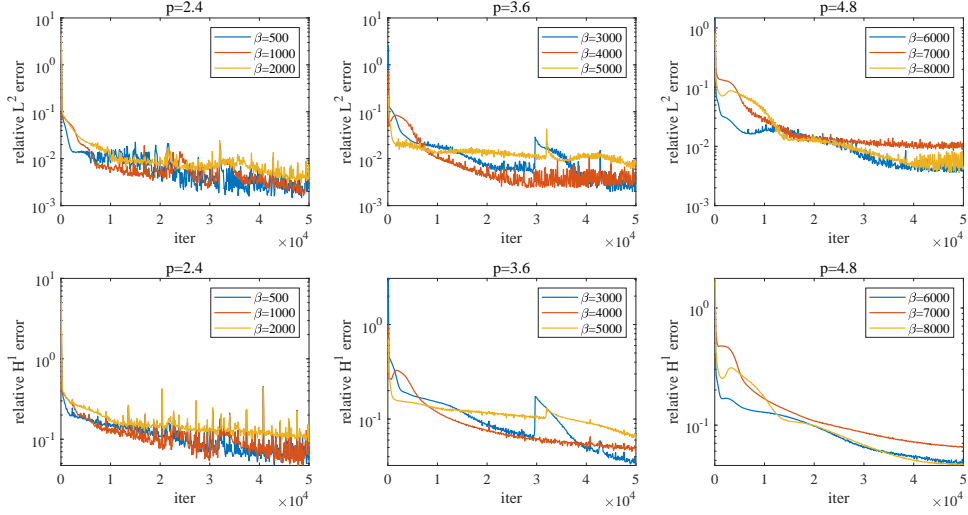}
\caption{A singular solution for $p$-Laplace equation. The upper figures show the relative $L^2$ error and the lower figures show the relative $H^1$ error.}\label{fig:nonlinear2}
\end{figure}
\begin{table}[htbp]\centering\caption{A singular solution for $p$-Laplace equation: for $p=2.4,3.6$ and $4.8$}\label{tab:nonlinear2}
\begin{tabular}{c|c|cc}
\hline
$p$&$\beta$ & $e_{L^2}$ & $e_{H^1}$\\
\hline
2.4 & 500 & 2.920e-03 & 7.758e-02\\
& 1000 & 3.786e-03 & 7.312e-02\\
& 2000 & 3.680e-03 & 1.035e-01\\
\hline
3.6 & 3000 & 3.294e-03 & 3.230e-02\\
& 4000 & 3.332e-03 & 4.699e-02\\
& 5000 & 8.085e-03 & 6.393e-02\\
\hline
4.8 & 6000 & 4.752e-03 & 4.605e-02\\
& 7000 & 9.039e-03 & 6.439e-02\\
& 8000 & 4.540e-03 & 4.577e-02\\
\hline
\end{tabular}\end{table}

In view of Figure~\ref{fig:nonlinear1}, Figure~\ref{fig:nonlinear2}, Table~\ref{tab:nonlinear1}, and Table~\ref{tab:nonlinear2}, Deep Nitsche Method equally works for the $p$-Laplace equation with small $p$ as well as large $p$. For the problem with less smooth solution, it seems wise to choose larger $\beta$ as $p$ grows.
\subsection{High-dimensional examples}
We turn to high dimensional problems. We still employ an Adam optimizer with the learning rate $0.001$ and train the model for $50000$ epochs. In each epoch, we use a Quasi-Monte Carlo method based on a low-discrepancy Halton sequence to generate $512$ points inside the domain $\Omega$ and $64$ points on each face of $\pa\Omega$.

In the first example, we consider the problem~\eqref{eq:mixbvp} on a hypercube $\Omega=(0,1)^{20}$ with pure Dirichlet boundary condition, for which we choose $f$ and $g_D$ such that the solution to~\eqref{eq:mixbvp} is given by %nonsmooth solution in $20$ dimension
\[
u(x)=\Lr{\sum_{i=1}^{20}x_i^2}^{5/2}.
\]
This example in three dimension has been test in~\cite{Griebel:2003} with a particle-partition of unit method. We approximate the solution by a neural network with five residual blocks and $50$ hidden nodes per fully connected layer. Thus the number of trainable parameters is $26601$. We report the relative errors in Figure~\ref{fig:singular20} and Table~\ref{tab:singular20} with different penalized parameters $\beta$.
\begin{figure}[htbp]\centering
\includegraphics[width=\textwidth]{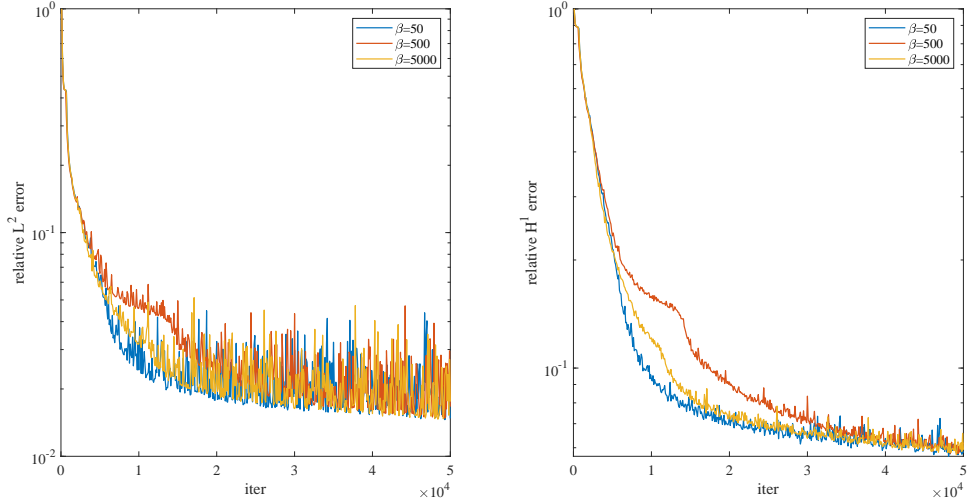}
\caption{Less smooth solution in $20$-dimension.}\label{fig:singular20}
\end{figure}

\begin{table}[htbp]\centering\caption{Less smooth solution in $20$ dimension.}\label{tab:singular20}\begin{tabular}{c|cc}
\hline
$\beta$ & $e_{L^2}$ & $e_{H^1}$\\
\hline
50 & 1.477e-02 & 5.807e-02\\
500 & 1.668e-02 & 6.137e-02\\
5000 & 1.756e-02 & 5.919e-02\\
\hline
\end{tabular}\end{table}

In the second example, we consider a smooth solution in $100$ dimension,
\[
u(x)=\exp\Lr{\dfrac{1}{100}\sum_{i=1}^{100}x_i},\qquad x\in\Omega{:}=(0,1)^{100}
\]
with a pure Dirichlet boundary condition. We compute $f$ and $g_D$ by~\eqref{eq:mixbvp}. The exact solution $u$ is approximated by a neural network with five residual blocks and $100$ hidden nodes per fully connected layer, and the number of trainable parameters is $111201$. We report the relative errors in Figure~\ref{fig:smooth100} and Table~\ref{tab:smooth100}.
\begin{figure}[htbp]\centering
    \includegraphics[width=\textwidth]{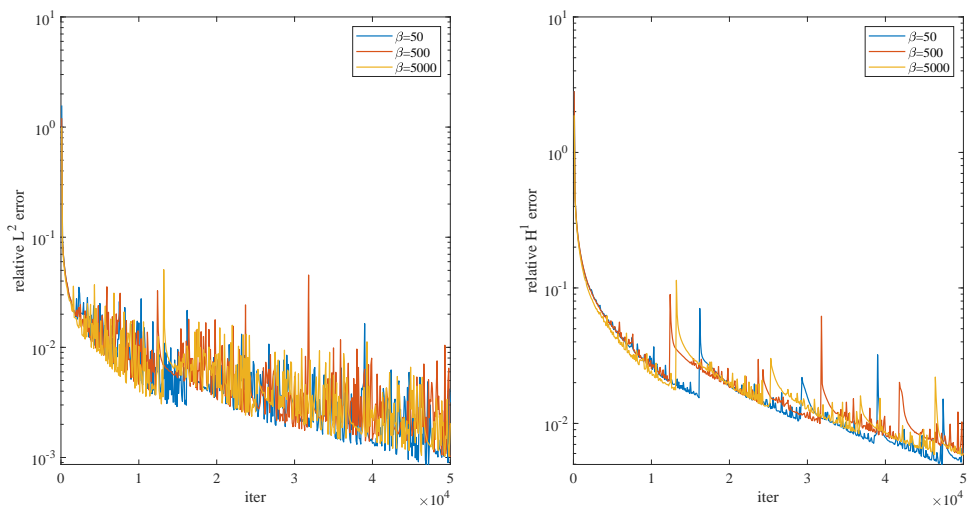}
    \caption{Smooth solution in $100$-dimension.}\label{fig:smooth100}
\end{figure}

\begin{table}[htbp]\centering\caption{Smooth solution in $100$ dimension.}\label{tab:smooth100}\begin{tabular}{c|cc}
\hline
$\beta$ & $e_{L^2}$ & $e_{H^1}$ \\
\hline
50 & 3.172e-03 & 5.924e-03\\
500 & 1.011e-03 & 5.850e-03\\
5000 & 2.049e-03 & 6.028e-03\\
\hline
\end{tabular}\end{table}

Figure~\ref{fig:singular20}, ~\ref{fig:smooth100} and Table~\ref{tab:singular20}, ~\ref{tab:smooth100} show that our method has potential to work for boundary value problems in rather high dimension.
\section{Conclusion}
Based on Nitsche's idea and representing the trial functions by deep neural networks, we propose a new method to deal with the complicated boundary conditions for boundary value problems. The test examples and the error estimate show that the method has the following advantages:
\begin{enumerate}
\item It deals with the mixed boundary conditions in a unified variational way without significant extra costs, and it fits well with the stochastic gradient descent method. It lends itself to a rigorous error estimate. 

\item It works for the problems in low dimension as well as high dimension. It also has potential to work for problems in rather high dimension. It equally works for nonlinear problems.

%\item The method is less sensitive to the penalized parameter, by contrast to that for the traditional trial space~\cite{Griebel:2003}. This is more pronounced for problems in high dimension. %{\red sensitive for nonlinear problem?}
\end{enumerate}

\iffalse There are still several issues we have not addressed. For instance, we have not studied the influence of the network structures; a systematical method to improve the accuracy is extremely desirable{\red ;what means?} we do not get the convergence rate of the proposed method though we have derived an energy error bound. The study of the inverse inequality~\eqref{eq:inverse} is of great theoretical interest.\fi

Besides the above remarks, it would be an interesting direction to extend the method to deal with the time-dependent problems, in particular for problems with mixed time varying boundary conditions. It is of great interest to prove the convergence rate of the method though we have derived an energy error bound. These will be left as future work.
%\bibliographystyle{amsplain}
%\bibliography{note}
%%%% Bibliography  %%%%%%%%%%

%\bibitem{Berger}M. J. Berger and P. Collela, Local adaptive mesh refinement
%for shock hydrodynamics,
%J. Comput. Phys., 82 (1989), 62-84.
%\bibitem{deBoor}C. de Boor,  Good Approximation By Splines With Variable Knots II, in Springer Lecture
% Notes Series 363, Springer-Verlag, Berlin, 1973.
%\bibitem{TanTZ} Z. J. Tan, T. Tang and Z. R. Zhang, A simple moving mesh method for one- and
%two-dimensional phase-field equations, J. Comput. Appl. Math., to appear.
% \bibitem{Toro}E. F. Toro, Riemann Solvers and Numerical Methods for Fluid Dynamics,
%Springer-Verlag Berlin Heidelbert, 1999.
\end{document}